\title[CPL = IPL + Duality]
{Classical Logic as Intuitionistic Logic with Duality}
\keywords{proof-theoretic semantics, base-extension semantics, classical logic, bilaterlaism, literals, bilateralism, speech acts}
\thanks{2020 \emph{Mathematics Subject Classification}. 03B05, 03F03, 03A05}
\author{Alexander V. Gheorghiu }
\address{\textbf{ORCID:} 0000-0002-7144-6910}
\address{School of Electronics and Computer Science, University of Southampton\\
University Road, Southampton, SO17 1BJ 
United Kingdom}
\address{Department of Computer Science, University College London\\
Gower St, London WC1E 6BT, UK}
\email{a.v.gheorghiu@soton.ac.uk}
\author{Yll Buzoku}
\address{\textbf{ORCID:} 0009-0006-9478-6009}
\address{Department of Computer Science, University College London\\
Gower St, London WC1E 6BT, UK}
\email{yll.buzoku.14@ucl.ac.uk}
\thanks{\emph{Thanks}. We would like to express our gratitude to Arghya Chakrabarty, Timo Eckhardt, Tao Gu, Victor Nascimento, Elaine Pimentel, and  David J. Pym for the many discussions that lead to the development of this work.}
\newtheorem{Theorem}{Theorem}[section]
\newtheorem{Proposition}[Theorem]{Proposition}
\newtheorem{Lemma}[Theorem]{Lemma}
\theoremstyle{Definition}
\newtheorem{Definition}[Theorem]{Definition}
\newtheorem{Example}[Theorem]{Example}
\renewcommand{\phi}{\varphi}
\renewcommand{\emptyset}{\varnothing}
\newcommand{\prop}{\bot}
\newcommand{\dual}[1]{#1^\prop}
\newcommand{\setLiterals}{\mathbb{L}}
\newcommand{\rn}[1]{\mathsf{#1}}
\newcommand{\ern}[1]{\rn{#1}_\mathsf{E}}
\newcommand{\irn}[1]{\rn{#1}_\mathsf{I}}
\newcommand{\exc}{\rn{EXC}}
\newcommand{\calculus}[1]{\mathsf{#1}}
\newcommand{\system}[1]{\textsc{#1}}
\newcommand{\base}[1]{\mathscr{#1}}
\newcommand{\supp}{\Vdash}
\newcommand{\proves}[1]{\vdash_{#1}}
\DeclareMathSymbol{\fatcomma}{\mathrel}{bbold}{\lq\,}
\newcommand{\kflat}[1]{#1^\flat}
\newcommand{\ksharp}[1]{#1^\natural}
\def\labelandtag#1#2{\begingroup
   \def\@currentlabel{#2}%
   \phantomsection\label{#1}\endgroup
}
\begin{document}

\begin{abstract}
The field of \emph{proof-theoretic semantics} (P-tS) offers an alternative approach to meaning in logic that is based on inference and argument (rather than truth in a model). It has been successfully developed for various logics; in particular, Sandqvist has developed such semantics for both classical and intuitionistic logic. In the case of classical logic, P-tS provides a conception of consequence that avoids an \emph{a priori} commitment to the principle of bivalence, addressing what Dummett identified as a significant foundational challenge in logic.  In this paper, we propose an alternative P-tS for classical logic, which essentially extends the P-tS for intuitionistic logic by operating over literals rather than atomic propositions. Importantly, literals are atomic and not defined by negation but are related by a primitive duality encoded inferentially at the atomic level. This semantics illustrates the perspective that classical logic can be understood as intuitionistic logic supplemented by a principle of duality, offering fresh insights into the relationship between these two systems.
\end{abstract}

\maketitle


\section{Introduction} \label{sec:introduction}
In this paper, we give an account of classical logic in which it is regarded as intuitionistic logic together with a duality. As a slogan:
\[
\textbf{Classical Logic} \;=\; \textbf{Intuitionistic Logic} \;+\; \textbf{Duality}
\]
As classical and intuitionistic logic are doubtless well-known, the key point is to understand `duality'. It is a form of bilateralism in which we take a `positive' and a `negative' version of propositions as co-equal. This requires careful development both philosophically and mathematically.

From a \emph{realist} perspective, propositions possess objective truth values independently of our capacity to establish them, and these truth values are typically taken to satisfy a boolean structure. Within this framework, duality between assertion and denial appears trivial aas it simply corresponds to the boolean negation: denying a proposition is nothing more than asserting its negation. This perspective is problematic for our purposes, however, because it begins from a realist perspective begging classical logic.  

From an \emph{anti-realist}, or \emph{constructivist}, perspective, the situation is considerably more subtle. On Dummett's account~\cite{Dummett1976}, anti-realism rejects verification-transcendent truth, arguing instead that meaning must be grounded in our actual practices of justification and verification. Understanding classical logic from this point of view is far more challenging. Dummett~\cite{Dummett1978} writes:
\begin{quotation}
    `In the resolution of the conflict between [accepting classical inferences without recourse to verification-transcendent truth conditions] lies, as I see it, one of the most fundamental and intractable problems in the theory of meaning; indeed, in all philosophy.'
\end{quotation}

The anti-realist programme can be carried forward in several ways. Most notably, it underlies \emph{intuitionism}. The Brouwer-Heyting-Kolmogorov (BHK) interpretation~\cite{troelstra2014constructivism,MartinLof1985} explains the meaning of logical constants constructively: a proof of $\phi \land \psi$ is a pair consisting of a proof of $\phi$ and a proof of $\psi$; a proof of $\phi \to \psi$ is a construction transforming any proof of $\phi$ into a proof of $\psi$; and so forth. The BHK interpretation is, however, an informal explanatory scheme rather than a formal definition.

A rigorous development of the anti-realist metatheory is \emph{proof-theoretic semantics} (P-tS)~\cite{SEP-PtS,francez2015proof,wansing2000idea}. As Schroeder-Heister~\cite{Schroeder2007modelvsproof} has observed, P-tS generalises and systematises the insights of the BHK interpretation, providing a framework in which the meaning of logical structures is given in terms of \emph{proofs} --- understood as objects denoting collections of acceptable inferences from accepted premises. Crucially, P-tS is not merely about providing a proof system. Since no formal system is fixed at the outset, only notions of inference, the relationship between semantics and provability remains unchanged: soundness and completeness are desirable properties of formal systems, not constituents of the semantic framework itself.

To illustrate the shift from traditional semantics to P-tS --- from denotationalism to inferentialism --- consider the proposition `Tammy is a vixen'. On an inferentialist account, its meaning is given by the rules:
\[
\infer{\text{Tammy is a vixen}}{\text{Tammy is a fox} & \text{Tammy is female}}
\qquad
\infer{\text{Tammy is female}}{\text{Tammy is a vixen}}
\qquad
\infer{\text{Tammy is a fox}}{\text{Tammy is a vixen}}
\]
These rules merit direct comparison with the natural deduction rules governing conjunction~($\land$):
\[
\frac{\phi \qquad \psi}{\phi \land \psi}
\qquad
\frac{\phi \land \psi}{\phi}
\qquad
\frac{\phi \land \psi}{\psi}
\]
This is precisely why `Tammy is a vixen' functions as a conjunction of `Tammy is female' and `Tammy is a fox'.

Within P-tS, there are several active research programmes --- see, for example, the discussion of \emph{proof-theoretic validity} (P-tV) in the Dummett-Prawitz tradition by Schroeder-Heister~\cite{Schroeder2006validity}. This paper concentrates on \emph{base-extension semantics} (B-eS), in the tradition of Piecha et al.~\cite{Piecha2015failure,Piecha2016completeness,Piecha2019incompleteness} and Sandqvist~\cite{Sandqvist2015hypothesis,Sandqvist2009CL,Sandqvist2015IL,Sandqvist2005inferentialist}. Deep connections between P-tV and B-eS have been established by Gheorghiu and Pym~\cite{PTV-BES}.

B-eS begins with the notion of an \emph{atomic system}: a collection of inferential relationships between atomic propositions, representing the beliefs an agent may possess about inferential connections between thoughts. The rules governing Tammy's vixenhood above are one such example. Various notions of atomic rule and atomic system appear in the literature; Piecha and Schroeder-Heister~\cite{Piecha2017definitional,Schroeder2016atomic} and Sandqvist~\cite{Sandqvist2015hypothesis} have given a systematic analysis building on earlier work by Prawitz~\cite{Prawitz2006natural} and Schroeder-Heister~\cite{schroeder1984natural}. Once a particular notion of atomic system is fixed, we call such systems \emph{bases}, $\base{B}$.

Relative to a notion of derivability in a base ($\vdash_{\base{B}}$), a B-eS is determined by a judgement of \emph{support} ($\Vdash_{\base{B}}$), defined inductively on the structure of formulae with the base case --- support for atoms --- given by provability in a base. A validity judgement is then induced by quantifying over all bases:
\[
\Gamma \Vdash \phi \qquad \text{iff} \qquad \Gamma \Vdash_{\base{B}} \phi \ \text{ for any base } \base{B}.
\]
Soundness and completeness of a consequence relation $(\vdash)$ with respect to this validity are understood in the usual terms:
\begin{itemize}
    \item Soundness: $\Gamma \proves{} \phi$ implies $\Gamma \supp \phi$.
    \item Completeness: $\Gamma \supp \phi$ implies $\Gamma \proves{} \phi$.
\end{itemize}

In `Classical Logic without Bivalence' (\emph{Analysis}, 2009), Sandqvist~\cite{Sandqvist2009CL} gave a B-eS for classical logic. Soundness is relatively straightforward; completeness is more involved, relying on ideas from constructive mathematics including bar induction. The metalogical reasoning is entirely constructive, providing a genuinely anti-realist foundation for classical logic --- and arguably answering Dummett's challenge.

Makinson~\cite{makinson2014inferential}, echoing de~Campos Sanz et al.~\cite{decampossanz2014constructive}, has identified two limitations of Sandqvist's approach. First, it is restricted to classical logic with connectives $\to$ and $\bot$ (and $\forall$); extension to $\land$ and $\lor$ was achieved only later by Stafford et al.~\cite{SPS2025}. Second, the completeness proof is technically involved, which led Makinson~\cite{makinson2014inferential} to develop an alternative proof passing through classical model-theoretic semantics --- a departure from the anti-realist programme.

This stands in notable contrast to Sandqvist's B-eS for intuitionistic logic~\cite{Sandqvist2015IL}. That system treats the full standard language and admits a completeness proof of a strikingly elementary character; as Sandqvist~\cite{Sandqvist2015IL} himself remarks:
\begin{quote}
    `The mathematical resources required for the purpose are quite elementary; there will be no need to invoke canonical models, K\"{o}nig's lemma, or even bar induction.'
\end{quote}
The proof proceeds by simulating natural deduction derivations within specially constructed bases. Gheorghiu et al.~\cite{IMLL,gheorghiu2024note} have argued that B-eS completeness follows analogously for logics whose natural deduction systems exhibit a certain symmetry, since one can essentially copy this approach. Importantly, the proof is \emph{constructive}, thereby satisfying Dummett's philosophical desiderata.

Gheorghiu~\cite{Gheorghiu2024fol} has given a completeness proof for classical B-eS using the approach developed for intuitionistic logic. However, it does not --- and, on its own terms, cannot --- handle the full propositional language: disjunctive signs such as $\lor$ and $\exists$ fall outside its scope. The question of how to treat the full language of classical logic by constructive means is thus open. That is the problem addressed in this paper, approached through the perspective of classical logic as intuitionistic logic plus duality.

To make this perspective precise, we must give an account of \emph{duality} consistent with the anti-realist metatheory. This account departs subtly but importantly from traditional views by treating formulae not as standalone propositions but as structured components of reasoning. These departures from the traditional reading of logical syntax are well-motivated philosophically.

The essential idea is that formulae are not regarded as propositions \emph{per se} but as \emph{logico-grammatical structures} of propositions. This view resonates with Wittgenstein's remark~\cite{wittgenstein2023tractatus}:
\begin{quotation}
    `5.4611 Signs for logical operations are punctuation marks.'
\end{quotation}
On this account, a proposition is an atomic \emph{speech act} comprising some \emph{content} articulated with a certain \emph{force}. A central tenet of speech act theory, going back to Stenius~\cite{Stenius1967}, is that force and content are categorically distinct: force is to content as a functional group is to a radical in chemistry. Just as the grouping of atoms is not itself another atom, the forwarding of content with a particular force is not itself a further component of that content~\cite{sep-speech-acts}. For classical logic, we consider the forces of \emph{assertion} and \emph{denial}.

This categorical distinction between force and content is crucial for understanding why denial cannot be reduced to assertion in an anti-realist framework. An assertion commits the speaker to defending the content against challenges of the form ``How do you know that?''; a denial commits the speaker to the dual challenge. These are fundamentally different normative commitments that cannot be collapsed into one another by applying negation to content. As Green~\cite{Green2000} observes, force is an aspect of \emph{how} content is meant, not \emph{what} is meant --- it is, in the terminology of speech act theory, an illocutionary rather than a locutionary phenomenon.

Adopting this position on the logical signs, the notion of duality becomes natural. When formulae are regarded as propositions, denial may be understood as the \emph{assertion of a negation} --- a position traceable to Frege~\cite{frege1991posthumous}:
\begin{quotation}
    `To each thought, there corresponds an opposite, so that rejecting one of them coincides with accepting the other. To make a judgment is to make a choice between opposite thoughts. Accepting one of them and rejecting the other is one act. So there is no need for a special sign for rejecting a thought. We only need a special sign for negation as such.'
\end{quotation}
When formulae are not themselves propositions but only logico-grammatical structures, however, it no longer makes sense to speak of `assertions of negations'. Denial cannot be reduced to assertion in this framework. Instead, the logical syntax is built over dual propositions called \emph{literals}, $l$ and $\dual{l}$. We restrict the bilateralist distinction to the \emph{atomic} level: only literals bear illocutionary force, while complex formulae built from literals are logico-grammatical structures of reasoning rather than propositions in their own right.

Within this setup, it remains coherent to speak of `negations of assertions': $\dual{l}$ is \emph{logically equivalent} to $\neg l$, where $\neg$ is the sign for negation. This is analogous to the way in which $\phi \to \psi$ is logically equivalent to $\neg\phi \lor \psi$, even when implication and disjunction are taken as primitive.

By way of illustration: consider an Englishman and a German attending a concert. The German is likely to assert outright `The concert is good!'~($c$) or deny it with `The concert is bad!'~($c^\bot$). The Englishman, characteristically less direct, might employ a litotes: if they enjoyed the concert, they hold that it is \emph{not} that `the concert is bad' ($\neg c^\bot$); if they did not, they might say it is \emph{not} that `the concert is good' ($\neg c$). These formulations differ in form but are equivalent in each case.

This is a \emph{bilateralist} perspective, in the sense of Smiley~\cite{Smiley1996} and Rumfitt~\cite{rumfitt2000yes}, though restricted to atoms rather than applied uniformly across all logical complexity. It enables a constructive completeness proof for classical logic in the full propositional language. The key insight is that classical logic inherits the proof-theoretic structure of intuitionistic logic while adding the symmetry of duality at the level of atoms. This allows us to leverage Sandqvist's elementary approach to completeness for intuitionistic logic while accounting for the distinctively classical features of the logic. Concretely, duality is encoded by two harmonious inferential principles at the atomic level: a principle of \emph{exclusion} (asserting and denying the same content is absurd) and a principle of \emph{case analysis} (anything derivable from supposing both an assertion and the dual denial is simply derivable). By a result of Negri and von~Plato~\cite{Negri2008}, this pair suffices to recover classical logic from an intuitionistic base.

\medskip
\noindent\textbf{Roadmap.}
Section~\ref{sec:speech-acts} develops the philosophical foundations of the bilateralist approach, distinguishing contents, forces, propositions, and formulae. Section~\ref{sec:cpl} presents classical propositional logic from this perspective, including valuations, logical consequence, and the natural deduction system $\calculus{NK}^{\pm}$. Section~\ref{sec:bes} develops the base-extension semantics. Sections~\ref{sec:soundness} and~\ref{sec:completeness} establish soundness and completeness, respectively. Section~\ref{sec:conclusion} reflects on the approach and identifies directions for future work.

\section{Speech Acts, Propositions, and Formulae}
\label{sec:speech-acts}

In this section, we develop the philosophical foundations of our bilateralist account of classical logic, carefully distinguishing several interrelated but distinct notions: \emph{contents}, \emph{forces}, \emph{propositions}, and \emph{formulae}. These distinctions underpin the technical development in the sections that follow.

\subsection{Contents, Forces, and Propositions}

Central to our account is the distinction between the \emph{content} of an utterance and its \emph{illocutionary force}. A content represents what the utterance is about; the force indicates how the speaker relates to that content. This distinction has deep roots in the history of logic and the philosophy of language.

In the tradition following Bolzano, Frege, and Russell, the notion of \emph{proposition} underwent significant evolution. What had once been called a proposition --- that which figures as a premiss or conclusion of an inference --- came to be called an \emph{assertion} or \emph{judgment}. The term \emph{proposition} was thereafter reserved for what Frege called \emph{thoughts} (\emph{Gedanken}): abstract contents capable of being true or false, independently of any act of assertion.

Frege recognised only one primitive illocutionary force, namely assertion, and explained denial as the assertion of a negation. This position has since been challenged. The key objection is that it conflates \emph{object-language negation} ($\neg l$, a logical operation on contents) with a meta-linguistic act of repudiation. In response, Smiley~\cite{Smiley1996} and Rumfitt~\cite{rumfitt2000yes} developed a \emph{bilateralist} approach in which both assertion and denial are treated as primitive forces. On this view, denying `the concert is good' is not the same speech act as asserting `the concert is bad' --- though these may be logically equivalent in certain contexts, they are syntactically and illocutionarily distinct.

We follow this bilateralist insight but depart from it in one crucial respect. Smiley, Rumfitt, and others apply bilateralism uniformly across all logical complexity, treating complex formulae such as $\varphi \land \psi$ as themselves capable of being asserted or denied. We instead restrict bilateralism to the \emph{atomic} level: only literals --- atomic propositions bearing force --- can be asserted or denied. Complex formulae built from literals by logical connectives are not themselves propositions; they are logico-grammatical structures organising propositional content into patterns of reasoning. As we shall see, this restriction yields a natural account of classical logic that leverages techniques from intuitionistic logic while avoiding complications that arise from uniform bilateralism.

\subsection{Literals as Propositions}

A \textbf{Proposition}, in our framework, is formed by combining a content with an illocutionary force. Let $\mathcal{C}$ be a denumerable set of \textbf{contents} --- abstract objects representing what propositions are about. We consider two types of atomic propositions, which we call \textbf{literals}:
\begin{itemize}
    \item \textbf{Positive literals} $c^+$: assertions of content $c \in \mathcal{C}$.
    \item \textbf{Negative literals} $c^-$: denials of content $c \in \mathcal{C}$.
\end{itemize}
Positive and negative literals are \emph{duals} of each other. Crucially, neither is defined in terms of the negation of the other: the duality is primitive, encoded directly at the propositional level rather than mediated through a logical connective. We write $\mathbb{L}$ for the set of all literals. These are the atomic speech acts that serve as the foundation for logical reasoning in our system.

\begin{Example}[Dual propositions]
    Consider a content $c$ representing concert quality. The two primitive, dual propositions are:
    \begin{itemize}
        \item $c^+$: `The concert is good' (assertion of positive quality).
        \item $c^-$: `The concert is bad' (denial of positive quality).
    \end{itemize}
    These propositions are not defined in terms of each other, nor in terms of negation. They are independent, dual expressions of an evaluation.
\end{Example}

\subsection{Logico-Grammatical Structures}
\label{subsec:formulae}

Following Wittgenstein's remark that `signs for logical operations are punctuation marks'~\cite{wittgenstein2023tractatus}, we treat logical connectives not as forming new propositions but as \emph{structuring devices} for complex reasoning. The set of \textbf{formulae} over $\mathbb{L}$ is defined by the following inductive grammar:
\begin{itemize}
    \item every literal in $\mathbb{L}$ is a formula;
    \item $\bot$ and $\top$ are formulae;
    \item if $\phi$ and $\psi$ are formulae, then so are $\phi \land \psi$, $\phi \lor \psi$, and $\phi \to \psi$.
\end{itemize}
We write $\neg \phi$ as an abbreviation for $\phi \to \bot$, and denote the set of all formulae by $\mathbb{F}$.

On this account, a formula $\phi \land \psi$ is not a proposition to be asserted or denied; it is a logico-grammatical structure indicating how two items of propositional content are combined in reasoning. Literals $c^+$ and $c^-$ are propositions --- objects of assertion, denial, and belief --- while complex formulae such as $(c^+ \to d^-) \land e^+$ are reasoning structures that organise propositions without themselves being objects of primitive speech acts.

One might object: if $c^+$ is a proposition that can be asserted, why should $c^+ \land d^+$ not also be a proposition that can be asserted? The traditional answer is affirmative, but this leads to complications. First, the conjunction $c^+ \land d^-$ --- which combines an assertion of $c$ with a denial of $d$ --- is ill-formed as a single speech act: what is its force? Second, whatever answer one gives, the behaviour of its putative dual $(c^+ \land d^-)^\perp = c^- \lor d^+$ is unclear: is $c^- \lor d^+$ a denial, and if so, how is it to be distinguished from an assertion of `denying $c$' or `asserting $d$'? Uniform bilateralism risks blurring the categorical distinction between force and content that motivated the framework in the first place.

These difficulties can, of course, be addressed within a fully bilateral framework. The sequent-calculus approach of Smiley~\cite{Smiley1996}, Rumfitt~\cite{rumfitt2000yes}, and more recently Incurvati and Schl\"{o}der~\cite{IncurvatiSchloder2017,IncurvatiSchloder2019} uses explicit assertion and denial zones --- a sequent $\Gamma \Vdash \Delta$ in which $\Gamma$ collects assertions and $\Delta$ collects denials. The natural deduction counterpart is developed in Restall's theory of natural deduction with alternatives~\cite{Restall2023}. For the purposes of this paper, however, we take a different route. We draw a three-way distinction:
\begin{itemize}
    \item \textbf{Propositions}: literals in $\mathbb{L}$, that is, contents paired with forces (primitive speech acts).
    \item \textbf{Formulae}: logical structures in $\mathbb{F}$ built from literals by connectives.
    \item \textbf{Judgments}: given a formula $\phi$, we write $\vdash \phi$ to express that $\phi$ is judged valid (defined in Section~\ref{sec:cpl}).
\end{itemize}
On this approach, $\Gamma \vdash \phi \land \psi$ means: from the premisses $\Gamma$, one can derive the structured conclusion that both $\phi$ and $\psi$ hold. The formula $\phi \land \psi$ is not itself asserted; it represents the logical structure of a valid inference pattern.

\begin{Example}
\label{ex:concert}
    Consider two speakers discussing a concert. The proposition $c^+$ --- the speech act of asserting concert quality --- is distinct from the judgment $\vdash \neg c^-$, which records the validity of a logical inference. A defining feature of classical logic is that the judgment $\vdash c^+$ and the judgment $\vdash \neg c^-$ are equivalent, expressed as $\vdash c^+ \leftrightarrow \neg c^-$.
\end{Example}

To develop the technical side of this account, we introduce the \emph{duality operator} $(-)^\perp$, already suggested by the examples above. At the level of literals, it is defined as:
\begin{align*}
    (c^+)^\perp &= c^- & &\text{(the dual of asserting $c$ is denying $c$),} \\
    (c^-)^\perp &= c^+ & &\text{(the dual of denying $c$ is asserting $c$).}
\end{align*}
The extension of $(-)^\perp$ to arbitrary formulae is defined in Section~\ref{sec:cpl}.

\section{Classical Propositional Logic}
\label{sec:cpl}

The bilateralist setup introduced in Section~\ref{sec:speech-acts} is designed to deliver on the central slogan of this paper:
\begin{center}
    \textbf{Classical Logic = Intuitionistic Logic + Duality.}
\end{center}
Intuitionistic logic can be developed with positive propositions --- assertions --- alone. Classical logic emerges when negative propositions --- denials --- are added as primitives, together with a duality relating each content to its dual.

What makes this duality \emph{classical}? On a realist account, $l$ and $\dual{l}$ carry opposite truth values: not necessarily in the sense of two-valued logic, but in the sense that the truth value of $\dual{l}$ is the Boolean complement of that of $l$ in whatever algebra of truth values one adopts. This is the perspective developed in Boolean-valued model theory~\cite{bell2011set}. Our concern here, however, is with the anti-realist account, on which duality must be expressed entirely in terms of inferential behaviour.

Carnap~\cite{carnap1943} first observed that standard formalisations of logic --- including the natural deduction rules --- fail to exclude non-normal interpretations of the connectives: the rules cannot rule out, for instance, interpretations under which both $\phi$ and $\neg\phi$ are simultaneously false, or under which every sentence is true. Raatikainen~\cite{raatikainen2008} revives this result and draws a pointed philosophical moral: inference rules alone cannot fully determine the meanings of logical constants, which poses a challenge for any purely inferentialist account of duality. One cannot simply read off the duality between assertion and denial from the rules governing negation, since those rules underdetermine the intended semantic relationship between a proposition and its dual.

Negri and von Plato~\cite{Negri2008} shows that the gap can be closed by adding decidability of atomic formulae as a structural assumption. Under this reading, classical logic is the system in which every atomic proposition $p$ satisfies $p \lor \neg p$, expressed inferentially by the rule
\[
    \infer{\phi}{\deduce{\phi}{[p]} & \deduce{\phi}{[\neg p]}.}
\]
Adding this rule to Gentzen's $\system{NJ}$ recovers classical logic. We take this as the inferentialist template for duality.

\begin{Definition}[Natural deduction system $\mathsf{NK}^\pm$]
\label{def:nk-pm}
    The system $\mathsf{NK}^\pm$ consists of the rules in Figure~\ref{fig:nk-pm}.
\end{Definition}

\begin{figure}[t]
    \hrule\vspace{2mm}
    \[
    \begin{array}{c}
        \infer[\top\mathsf{I}]{~~\top~~}{}
        \qquad
        \infer[\bot\mathsf{E}]{\varphi}{\bot}
        \\[3mm]
        \infer[{\to}\mathsf{I}]{\varphi \to \psi}{\deduce{\psi}{[\varphi]}}
        \qquad
        \infer[{\to}\mathsf{E}]{\psi}{\varphi \to \psi & \varphi}
        \\[3mm]
        \infer[{\land}\mathsf{I}]{\varphi \land \psi}{\varphi & \psi}
        \qquad
        \infer[{\land}\mathsf{E}^1]{\varphi_1}{\varphi_1 \land \varphi_2}
        \quad
        \infer[{\land}\mathsf{E}^2]{\varphi_2}{\varphi_1 \land \varphi_2}
        \\[3mm]
        \infer[{\lor}\mathsf{I}^1]{\varphi_1 \lor \varphi_2}{\varphi_1}
        \quad
        \infer[{\lor}\mathsf{I}^2]{\varphi_1 \lor \varphi_2}{\varphi_2}
        \qquad
        \infer[{\lor}\mathsf{E}]{\psi}{\varphi_1 \lor \varphi_2 & \deduce{\psi}{[\varphi_1]} & \deduce{\psi}{[\varphi_2]}}
        \\[3mm]
        \dotfill
        \\[3mm]
        \infer[\exc_1]{\bot}{~~l & \dual{l}}
        \qquad
        \infer[\exc_2]{\varphi}{\deduce{\varphi}{[l]} & \deduce{\varphi}{[\dual{l}]}}
    \end{array}
    \]
    \vspace{1mm}\hrule
    \caption{Natural deduction system $\mathsf{NK}^\pm$.}
    \label{fig:nk-pm}
\end{figure}

The rules above the dotted line constitute a standard natural deduction system for \emph{intuitionistic propositional logic}, traditionally denoted $\mathsf{NJ}$. The rules below govern the duality operator and are specific to classical logic:
\begin{itemize}
    \item $\exc_1$ (\emph{exclusion}): from $l$ and $\dual{l}$ one derives $\bot$.
    \item $\exc_2$ (\emph{case analysis}): if $\varphi$ is derivable from $l$ and also from $\dual{l}$, then $\varphi$ is derivable outright.
\end{itemize}
In both rules, $l$ is a literal and $\dual{l}$ is its dual; $\varphi$ may be any formula. We write $\Gamma \vdash_{\system{NK}^\pm} \phi$, or simply $\Gamma \vdash \phi$, to mean that $\phi$ is derivable from $\Gamma$ in $\system{NK}^\pm$.

The rule $\exc_1$ ensures that dual literals behave as negations of each other.

\begin{Proposition}\label{prop:negation}
    $\vdash_{\system{NK}^\pm} \dual{l} \leftrightarrow \neg l$.
\end{Proposition}

The rule $\exc_2$ ensures that the resulting system is classical: together with $\exc_1$, it recovers full classical logic from the intuitionistic base, in the sense of von Plato and Negri~\cite{Negri2008}.

\section{Base-Extension Semantics}
\label{sec:bes}

The natural deduction system $\system{NK}^\pm$ fixes the inferential behaviour of the 
logical constants, but proof theory alone cannot fully determine their meanings: as 
Carnap~\cite{carnap1943} observed, and Raatikainen~\cite{raatikainen2008} has sharpened, 
inference rules admit non-normal interpretations that no additional rules can exclude. 
A semantics is therefore required --- not merely as a technical convenience, but as 
the standard against which the adequacy of $\system{NK}^\pm$ is measured and, 
crucially, as the framework within which the constructive completeness proof of 
Section~\ref{sec:completeness} can be carried out.

Since we are beginning with intuitionistic logic, our metatheory is anti-realist. Therefore, we develop a proof-theoretic semantics for classical logic taking inferential relationships rather than truth  values as primitive.

Base-extension semantics (B-eS) begins with the notion of a \emph{base}. A base $\base{B}$ is a collection of inferential relationships between atomic propositions, represented by literals. These relationships are intended to be pre-logical: accordingly, $\bot$ does not appear among them, since it is a logical sign marking absurdity rather than a proposition.

Bases represent the inferential beliefs an agent may hold about connections between thoughts. Piecha and Schroeder-Heister~\cite{Piecha2017definitional,Schroeder2016atomic} and Sandqvist~\cite{Sandqvist2015hypothesis} have provided a detailed analysis of atomic systems, building on earlier work by Prawitz~\cite{Prawitz2006natural} and Schroeder-Heister~\cite{schroeder1984natural}. Whether atomic rules are best understood as encoding `knowledge' or `definition' remains debated; we defer to those accounts for discussion.

\begin{Definition}[Atomic rule]\label{def:atomic-rule}
    An \emph{atomic rule} is an expression of one of the following forms:
    \begin{enumerate}[label=(\roman*)]
        \item $L \Rightarrow l$, or
        \item $(L_1 \Rightarrow l_1), \ldots, (L_n \Rightarrow l_n) \Rightarrow l$,
    \end{enumerate}
    where $l, l_1, \ldots, l_n \in \setLiterals$ and $L, L_1, \ldots, L_n \subseteq \setLiterals$ are finite, possibly empty, sets of literals.
\end{Definition}

Atomic rules are read as natural deduction rules in the sense of Gentzen~\cite{Gentzen}. A rule $L \Rightarrow l$, where $L = \{l_1, \ldots, l_n\}$, corresponds to
\[
    \infer{l}{l_1 & \cdots & l_n},
\]
and a rule $(L_1 \Rightarrow l_1), \ldots, (L_n \Rightarrow l_n) \Rightarrow l$ corresponds to
\[
    \infer{l}{\deduce{l_1}{[L_1]} & \cdots & \deduce{l_n}{[L_n]}}.
\]
We write $\Rightarrow l$ as shorthand for $\emptyset \Rightarrow l$, that is, an axiom asserting $l$ unconditionally. A \emph{base} is a set of atomic rules.

\begin{Definition}[Base]
    A \emph{base} $\base{B}$ is a, possibly infinite, set of atomic rules.
\end{Definition}

Reading atomic rules as natural deduction patterns determines a notion of \emph{derivability in a base}. To capture classical reasoning at the atomic level, we supplement the standard structural clauses with two further clauses encoding duality.

\begin{Definition}[Derivability in a base]\label{def:derivability-in-a-base}
    Let $\base{B}$ be a base. \emph{Derivability in $\base{B}$}, written $\proves{\base{B}}$, is inductively defined as follows for any $L \subseteq \setLiterals$:
    \begin{enumerate}
        \item[\textsc{ref}.] If $l \in L$, then $L \proves{\base{B}} l$.
        \item[\textsc{app}$_1$.] If $\Rightarrow l \in \base{B}$, then $L \proves{\base{B}} l$.
        \item[\textsc{app}$_2$.] If $(L_1 \Rightarrow l_1), \ldots, (L_n \Rightarrow l_n) \Rightarrow l \in \base{B}$ and $L, L_i \proves{\base{B}} l_i$ for each $i = 1, \ldots, n$, then $L \proves{\base{B}} l$.
        \item[\textsc{exc}$_1$.] If $L \proves{\base{B}} l$ and $L \proves{\base{B}} \dual{l}$, then $L \proves{\base{B}} m$ for every $m \in \setLiterals$.
        \item[\textsc{exc}$_2$.] If $l, L \proves{\base{B}} m$ and $\dual{l}, L \proves{\base{B}} m$, then $L \proves{\base{B}} m$.
    \end{enumerate}
\end{Definition}

Restricting to clauses \textsc{ref}, \textsc{app}$_1$, and \textsc{app}$_2$ recovers the standard notion of derivability in a base used by Sandqvist~\cite{Sandqvist2015IL} for intuitionistic propositional logic. The clauses \textsc{exc}$_1$ and \textsc{exc}$_2$ extend this to classical propositional logic by incorporating duality at the literal level --- precisely where, as established in Section~\ref{sec:speech-acts}, bilateralism operates.

\subsection{Support}

Relative to derivability in bases, the meaning of logical connectives is given by a judgement called \emph{support}, defined inductively on the structure of formulae.

\begin{Definition}[Support]\label{def:support}
    \emph{Support} ($\supp$) is inductively defined by the clauses in Figure~\ref{fig:supp-cpl}.
\end{Definition}

\begin{figure}[ht]
    \hrule\vspace{2mm}
    \[
    \begin{array}{lcl @{\hspace{4em}} r}
        \supp_{\base{B}} l & \text{iff} & \proves{\base{B}} l & \ref{cl:CPL:at} \\[2mm]
        \supp_{\base{B}} \bot & \text{iff} & \proves{\base{B}} l \text{ for every } l \in \setLiterals & \ref{cl:CPL:bot} \\[2mm]
        \supp_{\base{B}} \top & & \text{always} & \ref{cl:CPL:top} \\[2mm]
        \supp_{\base{B}} \phi \to \psi & \text{iff} & \phi \supp_{\base{B}} \psi & \ref{cl:CPL:to} \\[2mm]
        \supp_{\base{B}} \phi \land \psi & \text{iff} & \supp_{\base{B}} \phi \text{ and } \supp_{\base{B}} \psi & \ref{cl:CPL:and} \\[2mm]
        \supp_{\base{B}} \phi \lor \psi & \text{iff} & \forall\,\base{C} \supseteq \base{B},\ \forall\, l \in \setLiterals\colon \text{ if } \phi \supp_{\base{C}} l \text{ and } \psi \supp_{\base{C}} l\text{, then } \supp_{\base{C}} l & \ref{cl:CPL:or} \\[2mm]
        \Delta \supp_{\base{B}} \phi & \text{iff} & \forall\,\base{C} \supseteq \base{B}\colon \text{ if } \supp_{\base{C}} \delta \text{ for every } \delta \in \Delta\text{, then } \supp_{\base{C}} \phi & \ref{cl:CPL:inf} \\[2mm]
        \Gamma \supp \phi & \text{iff} & \Gamma \supp_{\base{B}} \phi \text{ for every base } \base{B} &
    \end{array}
    \]
    \vspace{1mm}\hrule
    \caption{Support for classical propositional logic.}
    \label{fig:supp-cpl}
\end{figure}

\labelandtag{cl:CPL:at}{\ensuremath{(\text{At})}}
\labelandtag{cl:CPL:bot}{\ensuremath{(\bot)}}
\labelandtag{cl:CPL:top}{\ensuremath{(\top)}}
\labelandtag{cl:CPL:to}{\ensuremath{(\to)}}
\labelandtag{cl:CPL:and}{\ensuremath{(\land)}}
\labelandtag{cl:CPL:or}{\ensuremath{(\lor)}}
\labelandtag{cl:CPL:inf}{\ensuremath{(\text{Inf})}}

Definition~\ref{def:support} is inductive, but the induction measure is not syntactic size. It is instead the \emph{logical weight} $w(\varphi)$, defined following Sandqvist~\cite{Sandqvist2015IL} by:
\[
    w(\varphi) :=
    \begin{cases}
        0 & \text{if } \varphi \in \setLiterals, \\
        1 & \text{if } \varphi \in \{\bot, \top\}, \\
        w(\varphi_1) + w(\varphi_2) + 1 & \text{if } \varphi = \varphi_1 \circ \varphi_2,\ \circ \in \{\to, \land, \lor\}.
    \end{cases}
\]
In each clause of Figure~\ref{fig:supp-cpl}, the total weight of complex formulae in the definiens is strictly less than that in the definiendum, so the definition is well-founded. We call induction on this measure \emph{semantic induction}, to distinguish it from structural induction on syntactic complexity.

The support clauses in Figure~\ref{fig:supp-cpl} are precisely those of Sandqvist~\cite{Sandqvist2015IL} for intuitionistic propositional logic. This is not incidental: it reflects our central thesis that classical logic is intuitionistic logic plus duality at the atomic level. The classical character of the semantics is carried entirely by the clauses \textsc{exc}$_1$ and \textsc{exc}$_2$ of Definition~\ref{def:derivability-in-a-base}, which operate only on literals. Above the atomic level, the connectives receive exactly their intuitionistic meaning --- in keeping with the view of complex formulae as logico-grammatical structures rather than propositions in their own right.

A natural question is whether, in the bilateral setting, the clause for disjunction simplifies to the Kripke condition
\[
    \supp_{\base{B}} \phi \lor \psi \qquad \text{iff} \qquad \supp_{\base{B}} \phi \text{ or } \supp_{\base{B}} \psi.
\]
It does not. To see this, let $\phi = l$ and $\psi = m$ be literals, and let
\[
    \base{B} := \bigl\{\,(l \Rightarrow x),\, (m \Rightarrow x) \Rightarrow x \;\big|\; x \in \setLiterals\,\bigr\}.
\]
Then $\supp_{\base{B}} l \lor m$ holds under Sandqvist's clause but fails under the Kripke condition, since neither $\supp_{\base{B}} l$ nor $\supp_{\base{B}} m$ need hold.

We claim that the semantics characterises classical propositional logic:
\[
    \Gamma \supp \varphi \qquad \text{if and only if} \qquad \Gamma \vdash_{\system{NK}^\pm} \varphi.
\]
This is established by soundness (Theorem~\ref{thm:cpl-soundness}) and completeness (Theorem~\ref{thm:CPL:completeness-support}) in the sections that follow.

\subsection{Basic Properties}

We record several elementary but useful properties of derivability in a base and of support. The first two are straightforward.

\begin{Proposition}[Weakening]\label{prop:CPL:hyp-weakening}
    For any base $\base{B}$, if $L \proves{\base{B}} l$, then $M, L \proves{\base{B}} l$.
\end{Proposition}

\begin{proof}
    By induction on the derivation of $L \proves{\base{B}} l$.

    \medskip
    \textsc{ref}: Since $l \in L \subseteq M \cup L$, the result follows by \textsc{ref}.

    \medskip
    \textsc{app}$_1$: We have $\Rightarrow l \in \base{B}$. Since $M \cup L \subseteq \setLiterals$, the result follows by \textsc{app}$_1$.

    \medskip
    \textsc{app}$_2$: There is a rule $(L_1 \Rightarrow l_1), \ldots, (L_n \Rightarrow l_n) \Rightarrow l \in \base{B}$ with $L, L_i \proves{\base{B}} l_i$ for each $i$. By the induction hypothesis, $M, L, L_i \proves{\base{B}} l_i$ for each $i$. The result follows by \textsc{app}$_2$.

    \medskip
    \textsc{exc}$_1$: There exists $n \in \setLiterals$ such that $L \proves{\base{B}} n$ and $L \proves{\base{B}} \dual{n}$. By the induction hypothesis, $M, L \proves{\base{B}} n$ and $M, L \proves{\base{B}} \dual{n}$. The result follows by \textsc{exc}$_1$.

    \medskip
    \textsc{exc}$_2$: There exists $n \in \setLiterals$ such that $L, n \proves{\base{B}} l$ and $L, \dual{n} \proves{\base{B}} l$. By the induction hypothesis, $M, L, n \proves{\base{B}} l$ and $M, L, \dual{n} \proves{\base{B}} l$. The result follows by \textsc{exc}$_2$.
\end{proof}

The following propositions are proved by the same arguments as in Sandqvist~\cite{Sandqvist2015IL}, with straightforward additional cases for \textsc{exc}$_1$ and \textsc{exc}$_2$; we elide the details.

\begin{Proposition}[Monotonicity]\label{prop:CPL:base-weakening}
    If $L \proves{\base{B}} l$ and $\base{C} \supseteq \base{B}$, then $L \proves{\base{C}} l$.
\end{Proposition}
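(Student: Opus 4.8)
The plan is to mirror the proof of Weakening (Proposition~\ref{prop:CPL:hyp-weakening}) just given and proceed by induction on the structure of the derivation witnessing $L \proves{\baseB} l$, using the five closure conditions of Definition~\ref{def:derivability-in-a-base}. The single observation driving every case is that $\baseC \supseteq \baseB$ means every atomic rule available in $\baseB$ is also available in $\baseC$; reflexivity, moreover, does not mention the base at all. So at each step I would simply re-apply the same closure condition, now over $\baseC$, appealing to the induction hypothesis to upgrade the judgements for any premises.

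Concretely: for \textsc{ref}, if $l \in L$ then $L \proves{\baseC} l$ holds directly by \textsc{ref}. For \textsc{app}$_1$, if $\Rightarrow l \in \baseB$ then $\Rightarrow l \in \baseC$ by the inclusion, so $L \proves{\baseC} l$ by \textsc{app}$_1$. For \textsc{app}$_2$, the rule $(L_1 \Rightarrow l_1),\ldots,(L_n \Rightarrow l_n) \Rightarrow l$ lies in $\baseC$ as well, and the induction hypothesis upgrades each premise $L, L_i \proves{\baseB} l_i$ to $L, L_i \proves{\baseC} l_i$, whence $L \proves{\baseC} l$ by \textsc{app}$_2$. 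The cases \textsc{abs} and \textsc{dm} are handled identically: apply the induction hypothesis to the sub-derivations of $L \proves{\baseB} m$ and $L \proves{\baseB} \dual{m}$ (respectively $m, L \proves{\baseB} \bot$) to obtain the corresponding judgements over $\baseC$, then close under \textsc{abs} (respectively \textsc{dm}) to conclude.

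I do not expect any genuine obstacle here. Unlike Weakening, where one must track the growth of the left-hand context $L$ through each rule, Monotonicity leaves both $L$ and $l$ fixed and only enlarges the base, so every case reduces to observing that the relevant rule persists into $\baseC$ and invoking the induction hypothesis. This is precisely why the result goes \emph{exactly as in Sandqvist}~\cite{Sandqvist2015IL}: the additional conditions \textsc{abs} and \textsc{dm} introduced for the classical setting are preserved under base extension for the same trivial reason as the intuitionistic conditions \textsc{ref}, \textsc{app}$_1$, and \textsc{app}$_2$.
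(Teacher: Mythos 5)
Your proof is correct and takes essentially the same approach as the paper, which simply defers to Sandqvist's argument: a rule induction on how $L \proves{\baseB} l$ obtains, where each closure condition re-applies over $\baseC$ because $\baseB \subseteq \baseC$ preserves rule availability. Your explicit treatment of the new cases \textsc{abs} and \textsc{dm} is exactly the routine extension the paper leaves implicit.
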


\begin{Proposition}\label{prop:emptybase}
    $\Gamma \supp \phi$ if and only if $\Gamma \supp_{\emptyset} \phi$.
\end{Proposition}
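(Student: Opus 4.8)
The plan is to observe that the forward direction is immediate and that the backward direction is an instance of monotonicity of support over bases. Since $\Gamma \supp \phi$ abbreviates ``$\Gamma \supp_{\baseB} \phi$ for every base $\baseB$'', taking $\baseB = \emptyset$ yields $\Gamma \supp_{\emptyset} \phi$ at once; this settles the direction ($\Rightarrow$).

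For ($\Leftarrow$), the key is the following monotonicity statement for support: if $\Gamma \supp_{\baseB} \phi$ and $\baseC \baseGeq \baseB$, then $\Gamma \supp_{\baseC} \phi$. Granting this, the backward direction follows immediately, since every base $\baseB$ satisfies $\baseB \baseGeq \emptyset$; thus from $\Gamma \supp_{\emptyset} \phi$ we obtain $\Gamma \supp_{\baseB} \phi$ for every $\baseB$, which is exactly $\Gamma \supp \phi$. In this sense the whole proposition reduces to support monotonicity.

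It remains to establish that monotonicity. When $\Gamma \neq \emptyset$, it is immediate from clause \ref{cl:CPL:inf}: the definiens of $\Gamma \supp_{\baseB} \phi$ quantifies over all $\baseX \baseGeq \baseB$, and since $\baseC \baseGeq \baseB$ every $\baseX \baseGeq \baseC$ also satisfies $\baseX \baseGeq \baseB$, so the required implication transfers. When $\Gamma = \emptyset$, I would argue by semantic induction on the logical weight $w(\phi)$. The base cases are the atoms and $\bot$: for $\phi = l \in \setLiterals$, clause \ref{cl:CPL:at} reduces the claim to Proposition~\ref{prop:CPL:base-weakening}, and for $\phi = \bot$, clause \ref{cl:CPL:bot} reduces it to that proposition applied to every literal; the case $\phi = \top$ is vacuous by clause \ref{cl:CPL:top}. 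For the compound cases, clause \ref{cl:CPL:and} expresses $\supp_{\baseB} \phi$ in terms of support judgments of strictly smaller weight (whence the induction hypothesis applies), clause \ref{cl:CPL:to} reduces to the nonempty-context judgment $\psi \supp_{\baseB} \chi$ already covered by clause \ref{cl:CPL:inf}, and clause \ref{cl:CPL:or} itself quantifies over extensions $\baseX \baseGeq \baseB$; in each the passage from $\baseB$ to $\baseC \baseGeq \baseB$ then follows using the induction hypothesis or the transitivity of $\baseGeq$.

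I do not expect a serious obstacle here; the only point requiring care is the bookkeeping in the semantic (rather than structural) induction for the $\Gamma = \emptyset$ case, ensuring that each clause's definiens is matched to the correct, lower-weight instance of the monotonicity claim. Everything else is a direct unfolding of Definition~\ref{def:supp} together with Proposition~\ref{prop:CPL:base-weakening}.
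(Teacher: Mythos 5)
Your proof is correct and matches the paper's intended argument: the paper proves this proposition ``exactly as in Sandqvist'', i.e., by the very monotonicity-of-support lemma you establish (immediate from clause \ref{cl:CPL:inf} for nonempty contexts, and by semantic induction on $w(\phi)$ using Proposition~\ref{prop:CPL:base-weakening} for the base cases otherwise), followed by instantiation at $\emptyset$ and the observation that every base extends $\emptyset$. Nothing is missing; the case analysis over the clauses of Figure~\ref{fig:supp-cpl} is exactly the standard bookkeeping.
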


The following result is the key technical lemma underlying completeness. It is a generalisation of Lemma~2.2 of Sandqvist~\cite{Sandqvist2015IL}, extended to handle the two exclusion clauses.

\begin{Proposition}\label{prop:CPL:atomic-cut}
    For any base $\base{B}$, any $M, L \subseteq \setLiterals$, and any $l \in \setLiterals$,
    \[
        M, L \proves{\base{B}} l
        \qquad \text{iff} \qquad
        \forall\,\base{C} \supseteq \base{B}\colon \text{ if } \proves{\base{C}} m \text{ for every } m \in M\text{, then } L \proves{\base{C}} l.
    \]
\end{Proposition}

\begin{proof}
    $(\Rightarrow)$\quad Assume $M, L \proves{\base{B}} l$. Let $\base{C} \supseteq \base{B}$ be such that $\proves{\base{C}} m$ for every $m \in M$. We show $L \proves{\base{C}} l$ by induction on the derivation of $M, L \proves{\base{B}} l$.

    \medskip
    \textsc{ref}: Either $l \in L$, in which case \textsc{ref} gives $L \proves{\base{C}} l$ directly, or $l \in M$, in which case $\proves{\base{C}} l$ by hypothesis, and Proposition~\ref{prop:CPL:hyp-weakening} gives $L \proves{\base{C}} l$.

    \medskip
    \textsc{app}$_1$: We have $\Rightarrow l \in \base{B} \subseteq \base{C}$. The result follows by \textsc{app}$_1$.

    \medskip
    \textsc{app}$_2$: There is a rule $(L_1 \Rightarrow l_1), \ldots, (L_n \Rightarrow l_n) \Rightarrow l \in \base{B} \subseteq \base{C}$ with $M, L, L_i \proves{\base{B}} l_i$ for each $i$. By the induction hypothesis, $L, L_i \proves{\base{C}} l_i$ for each $i$. The result follows by \textsc{app}$_2$.

    \medskip
    \textsc{exc}$_1$: There exists $n \in \setLiterals$ such that $M, L \proves{\base{B}} n$ and $M, L \proves{\base{B}} \dual{n}$. By the induction hypothesis, $L \proves{\base{C}} n$ and $L \proves{\base{C}} \dual{n}$. The result follows by \textsc{exc}$_1$.

    \medskip
    \textsc{exc}$_2$: There exists $n \in \setLiterals$ such that $M, L, n \proves{\base{B}} l$ and $M, L, \dual{n} \proves{\base{B}} l$. By the induction hypothesis, $L, n \proves{\base{C}} l$ and $L, \dual{n} \proves{\base{C}} l$. The result follows by \textsc{exc}$_2$.

    \bigskip
    $(\Leftarrow)$\quad Set $\base{C} := \base{B} \cup \{\Rightarrow m \mid m \in M\}$. Then $\proves{\base{C}} m$ for every $m \in M$ by \textsc{app}$_1$, so the hypothesis gives $L \proves{\base{C}} l$. We show $M, L \proves{\base{B}} l$ by induction on the derivation of $L \proves{\base{C}} l$.

    \medskip
    \textsc{ref}: We have $l \in L$. The result follows by \textsc{ref}.

    \medskip
    \textsc{app}$_1$: We have $\Rightarrow l \in \base{C}$. By definition of $\base{C}$, either $\Rightarrow l \in \base{B}$, in which case \textsc{app}$_1$ gives $M, L \proves{\base{B}} l$, or $l \in M$, in which case \textsc{ref} gives $M, L \proves{\base{B}} l$.

    \medskip
    \textsc{app}$_2$: There is a rule $(L_1 \Rightarrow l_1), \ldots, (L_n \Rightarrow l_n) \Rightarrow l \in \base{C}$ with $L, L_i \proves{\base{C}} l_i$ for each $i$. Since rules of the form $(L_1 \Rightarrow l_1), \ldots \Rightarrow l$ do not appear in $\{\Rightarrow m \mid m \in M\}$, the rule lies in $\base{B}$. By the induction hypothesis, $M, L, L_i \proves{\base{B}} l_i$ for each $i$. The result follows by \textsc{app}$_2$.

    \medskip
    \textsc{exc}$_1$: There exists $n \in \setLiterals$ such that $L \proves{\base{C}} n$ and $L \proves{\base{C}} \dual{n}$. By the induction hypothesis, $M, L \proves{\base{B}} n$ and $M, L \proves{\base{B}} \dual{n}$. The result follows by \textsc{exc}$_1$.

    \medskip
    \textsc{exc}$_2$: There exists $n \in \setLiterals$ such that $L, n \proves{\base{C}} l$ and $L, \dual{n} \proves{\base{C}} l$. By the induction hypothesis, $M, L, n \proves{\base{B}} l$ and $M, L, \dual{n} \proves{\base{B}} l$. The result follows by \textsc{exc}$_2$.
\end{proof}

\section{Soundness}
\label{sec:soundness}

\begin{Theorem}[Soundness]\label{thm:cpl-soundness}
    If $\Gamma \vdash_{\mathsf{NK}^\pm} \phi$, then $\Gamma \supp \phi$.
\end{Theorem}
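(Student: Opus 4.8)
The plan is to reduce soundness of the B-eS to a proof-theoretic soundness statement and then verify it by induction on derivations. By the completeness half of Theorem~\ref{thm:CPL:snc-bilateral}, $\Gamma \vdash \phi$ yields a $\calculus{NK}^\pm$-derivation of $\phi$ from $\Gamma$, so it suffices to prove that $\Gamma \proves{\calculus{NK}^\pm} \phi$ implies $\Gamma \supp \phi$; I would establish this by induction on the structure of the derivation, showing that each rule of $\calculus{NK}^\pm$ preserves support. Two facts are used throughout: persistence of support along base extensions (if $\supp_{\baseB} \psi$ and $\baseC \baseGeq \baseB$ then $\supp_{\baseC} \psi$), which is immediate from Proposition~\ref{prop:CPL:base-weakening} at literals and from the universally quantified clauses of Figure~\ref{fig:supp-cpl} at complex formulae; and Proposition~\ref{prop:emptybase}, allowing one to reason about an arbitrary base and its extensions. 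For the rules $\irn\top$, $\ern\bot$, $\irn\to$, $\ern\to$, $\irn\land$, $\ern\land$, $\irn\lor$, $\ern\lor$ --- the $\calculus{NJ}$-fragment --- the arguments are exactly those of Sandqvist~\cite{Sandqvist2015IL} for IPL, since the associated support clauses are unchanged.

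The crux is the treatment of the two duality rules $\dM$ and $\exc$, and for these I would isolate a single \emph{duality lemma}:
\[
\supp_{\baseB} \dual{\phi} \quad \text{iff} \quad \phi \supp_{\baseB} \bot \qquad \text{for every base } \baseB \text{ and formula } \phi.
\]
This discharges both rules at once. For $\exc$, the inductive hypothesis gives $\supp_{\baseC} \phi$ and $\supp_{\baseC} \dual{\phi}$ at each relevant $\baseC$; the left-to-right direction turns $\supp_{\baseC} \dual{\phi}$ into $\phi \supp_{\baseC} \bot$, and instantiating the latter at $\baseC$ itself together with $\supp_{\baseC} \phi$ yields $\supp_{\baseC} \bot$, hence $\Gamma \supp \bot$. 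For $\dM$, the inductive hypothesis gives $\Gamma, \phi \supp \bot$; fixing a base $\baseC$ validating $\Gamma$ and using persistence to see $\phi \supp_{\baseC} \bot$, the right-to-left direction delivers $\supp_{\baseC} \dual{\phi}$, hence $\Gamma \supp \dual{\phi}$. I would prove the lemma by semantic induction on the weight $w(\phi)$. In the base case $\phi = l \in \setLiterals$, unfolding $l \supp_{\baseB} \bot$ through clauses \ref{cl:CPL:to} and \ref{cl:CPL:bot} and applying Proposition~\ref{prop:CPL:atomic-cut} (with the singleton $\{l\}$) and Proposition~\ref{prop:CPL:atomic_bot} reduces the right-hand side to $l \proves{\baseB} \bot$, which by \textsc{dm} and \textsc{abs} together with weakening (Proposition~\ref{prop:CPL:hyp-weakening}) is equivalent to $\proves{\baseB} \dual{l}$, i.e. to $\supp_{\baseB} \dual{l}$.

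The inductive steps follow the clauses of $\dual{(-)}$: for $\phi = \psi_1 \land \psi_2$ one matches the disjunction clause \ref{cl:CPL:or} for $\dual{\psi_1} \lor \dual{\psi_2}$ against the conjunction clause \ref{cl:CPL:and}, and dually for $\phi = \psi_1 \lor \psi_2$ and $\phi = \psi_1 \to \psi_2$, invoking the inductive hypothesis on the immediate subformulae. I expect the main obstacle to be the right-to-left direction (the one feeding $\dM$) in the implication case: from a refutation $(\psi_1 \to \psi_2) \supp_{\baseB} \bot$ one must recover \emph{positive} support $\supp_{\baseB} \psi_1$ for a subformula, which has no intuitionistic analogue and must be extracted using the genuinely classical atomic rules \textsc{abs} and \textsc{dm} inside suitable base extensions, rather than by direct structural manipulation. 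The disjunctive and literal cases are comparatively benign, the former resolving through soundness of $\irn\lor$ at the semantic level and the latter being precisely where \textsc{abs} and \textsc{dm} --- the only classical ingredients of derivability in a base --- enter; it is the soundness of these atomic rules against the clauses \ref{cl:CPL:bot} and those governing duality that ultimately makes the argument go through.
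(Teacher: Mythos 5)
Your proposal is correct and follows essentially the same route as the paper: reduce soundness to rule-wise soundness of the $\calculus{NK}^\pm$ rules, defer the intuitionistic fragment to Sandqvist's argument for IPL, and establish the two duality rules by semantic induction on the weight of $\phi$, bottoming out at literals via Propositions~\ref{prop:CPL:hyp-weakening}, \ref{prop:CPL:atomic-cut}, and \ref{prop:CPL:atomic_bot} together with the atomic rules \textsc{abs} and \textsc{dm}. Your single biconditional duality lemma ($\supp_{\baseB} \dual{\phi}$ iff $\phi \supp_{\baseB} \bot$) is a repackaging, via persistence, of the paper's separate ($\exc$) and ($\dM$) cases rather than a genuinely different argument --- indeed, the paper's own inductive steps (e.g., the reformulation of $\supp_{\baseB}\alpha$ and $\supp_{\baseB}\beta^\bot$ in the $\dM$ case for $\alpha \to \beta$) implicitly invoke both directions as induction hypotheses, so your formulation merely makes explicit, and arguably states more carefully, what the paper's induction already requires.
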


\begin{proof}
    Fix a base $\base{B}$ and suppose $\supp_{\base{B}} \gamma$ for every $\gamma \in \Gamma$. It suffices to show $\supp_{\base{B}} \phi$. We proceed by induction on the $\mathsf{NK}^\pm$-derivation of $\phi$.

    The cases corresponding to the rules of $\mathsf{NJ}$ (i.e., all rules in Figure~\ref{fig:nk-pm} above the dotted line) follow by the same arguments as in Sandqvist~\cite{Sandqvist2015IL}, since those rules and the corresponding support clauses are unchanged. It remains to treat the two classical rules.

    \medskip
    \noindent$\exc_1$:\enspace Suppose $\supp_{\base{B}} l$ and $\supp_{\base{B}} \dual{l}$. By~\ref{cl:CPL:at}, we have $\proves{\base{B}} l$ and $\proves{\base{B}} \dual{l}$. By \textsc{exc}$_1$, therefore $\proves{\base{B}} m$ for every $m \in \setLiterals$, and hence $\supp_{\base{B}} \bot$ by~\ref{cl:CPL:bot}.

    \medskip
    \noindent$\exc_2$:\enspace Suppose $l \supp_{\base{B}} \phi$ and $\dual{l} \supp_{\base{B}} \phi$ for some literal $l$. We show $\supp_{\base{B}} \phi$ by induction on $w(\phi)$ presented as a case analysis on the structure of $\phi$.

    \medskip
    \emph{$\phi \in \setLiterals$.}\enspace By~\ref{cl:CPL:inf} and~\ref{cl:CPL:at}, the hypotheses give $l \proves{\base{B}} \phi$ and $\dual{l} \proves{\base{B}} \phi$. Applying \textsc{exc}$_2$ yields $\proves{\base{B}} \phi$, and hence $\supp_{\base{B}} \phi$ by~\ref{cl:CPL:at}.

    \medskip
    \emph{$\phi = \top$.}\enspace Immediate from~\ref{cl:CPL:top}.

    \medskip
    \emph{$\phi = \bot$.}\enspace It suffices to show $\proves{\base{B}} m$ for every $m \in \setLiterals$. Fix $m \in \setLiterals$. By~\ref{cl:CPL:inf} and~\ref{cl:CPL:at}, the hypotheses give $l \proves{\base{B}} m$ and $\dual{l} \proves{\base{B}} m$. Applying \textsc{exc}$_2$ yields $\proves{\base{B}} m$, as required.

    \medskip
    \emph{$\phi = \alpha \land \beta$.}\enspace By~\ref{cl:CPL:and}, it suffices to show $\supp_{\base{B}} \alpha$ and $\supp_{\base{B}} \beta$ separately. For each conjunct, the hypotheses imply $l \supp_{\base{B}} \alpha$ and $\dual{l} \supp_{\base{B}} \alpha$ (respectively for $\beta$) by~\ref{cl:CPL:inf} and~\ref{cl:CPL:and}. The conclusion follows by the induction hypothesis, since $w(\alpha), w(\beta) < w(\phi)$.

    \medskip
    \emph{$\phi = \alpha \to \beta$.}\enspace Let $\base{C} \supseteq \base{B}$ and suppose $\supp_{\base{C}} \alpha$; we must show $\supp_{\base{C}} \beta$. By~\ref{cl:CPL:inf} and~\ref{cl:CPL:to}, the hypotheses give $l, \alpha \supp_{\base{B}} \beta$ and $\dual{l}, \alpha \supp_{\base{B}} \beta$. By Proposition~\ref{prop:CPL:base-weakening} and~\ref{cl:CPL:inf}, therefore $l \supp_{\base{C}} \beta$ and $\dual{l} \supp_{\base{C}} \beta$. The conclusion $\supp_{\base{C}} \beta$ follows by the induction hypothesis.

    \medskip
    \emph{$\phi = \alpha \lor \beta$.}\enspace Let $\base{C} \supseteq \base{B}$ and $m \in \setLiterals$, and suppose $\alpha \supp_{\base{C}} m$ and $\beta \supp_{\base{C}} m$; we must show $\supp_{\base{C}} m$ (cf.~\ref{cl:CPL:or}). By the argument of the base case (with $\base{C}$ in place of $\base{B}$), it suffices to show $l \supp_{\base{C}} m$ and $\dual{l} \supp_{\base{C}} m$; we treat the former, the latter being symmetric.

    By hypothesis, $l \supp_{\base{B}} \alpha \lor \beta$. By Proposition~\ref{prop:CPL:base-weakening} and~\ref{cl:CPL:inf}, this gives $l \supp_{\base{C}} \alpha \lor \beta$. Unfolding~\ref{cl:CPL:inf} and~\ref{cl:CPL:or}: for every $\base{D} \supseteq \base{C}$, if $\supp_{\base{D}} l$ and $\alpha \supp_{\base{D}} m$ and $\beta \supp_{\base{D}} m$, then $\supp_{\base{D}} m$. Since $\alpha \supp_{\base{C}} m$ and $\beta \supp_{\base{C}} m$, Proposition~\ref{prop:CPL:base-weakening} gives $\alpha \supp_{\base{D}} m$ and $\beta \supp_{\base{D}} m$ for every $\base{D} \supseteq \base{C}$. Hence, for every $\base{D} \supseteq \base{C}$, $\supp_{\base{D}} l$ implies $\supp_{\base{D}} m$. By~\ref{cl:CPL:inf}, this is exactly $l \supp_{\base{C}} m$, as required.
\end{proof}

\section{Completeness}
\label{sec:completeness}

Our aim is to establish the following:
\[
    \text{if } \Gamma \supp \phi, \text{ then } \Gamma \vdash_{\mathsf{NK}^\pm} \phi.
\]
To this end, we adapt the methodology developed by Sandqvist~\cite{Sandqvist2015IL} for intuitionistic propositional logic. While this method is readily deployable for intuitionistic logics~\cite{BI,IMLL}, extending it to classical logic requires additional work and does not apparently handle the disjunctive signs as primitive~\cite{Gheorghiu2024fol}. The bilateral setup adopted here renders the approach more directly applicable. Our strategy presupposes an unlimited supply of fresh contents: we assume a potentially infinite set $\mathcal{C}$, yielding a potentially infinite stock of propositions $l_1^+, l_1^-, l_2^+, l_2^-, l_3^+, l_3^-, \ldots$. We emphasise \emph{potential} rather than \emph{actual} infinity, in keeping with the anti-realist commitments of our metatheory.

\subsection{The Simulation Base}

The central construction is a canonical base $\base{N}$, called the \emph{simulation base}, which encodes the inferential definitions of all formulae in $\Gamma$ and $\phi$ and thereby bridges semantics and provability. Its construction proceeds as follows.

We begin by partitioning $\setLiterals := \setLiterals_1 \cup \setLiterals_2$ so that $\dual{(-)}: \setLiterals_1 \to \setLiterals_2$ is a bijection. Let $\textrm{Sub}(\Gamma, \phi)$ denote the set of subformulae of $\Gamma \cup \{\phi\}$. We define $\kflat{(-)}$ to be the identity on
\[
    \bigl(\textrm{Sub}(\Gamma, \phi) \cup \{\top, \bot\}\bigr) \cap \setLiterals,
\]
and extend it to an injection
\begin{equation}
    \tag{$\ast$}
    \kflat{(-)}: \bigl(\textrm{Sub}(\Gamma, \phi) \cup \{\top, \bot\}\bigr) \setminus \setLiterals \;\longrightarrow\; \setLiterals_1.
\end{equation}
Let $\ksharp{(-)}$ denote the left inverse of $\kflat{(-)}$. We extend it to the full $\setLiterals$ as follows:
\begin{itemize}
\item first, set $\ksharp{l} = l$ whenever whenever $l$ is a literal not in the range of $\kflat{(-)}$;
    \item second,  set $\ksharp{(\dual{l})} := \neg\,\ksharp{l}$ whenever $l$ is the $\kflat{(-)}$-image of a non-atomic formula.  The bipartite structure of $\setLiterals$ ensures that for every non-atomic $\phi \in \textrm{Sub}(\Gamma, \phi)$,
\[
    \ksharp{\bigl(\dual{(\kflat{\phi})}\bigr)} = \neg\,\phi.
\]
\end{itemize}

Both operators extend pointwise to sets:
\[
    \kflat{\Delta} := \{\kflat{\delta} \mid \delta \in \Delta\}
    \qquad\text{and}\qquad
    \ksharp{L} := \{\ksharp{l} \mid l \in L\}.
\]
We restrict attention to the case in which the injection $(\ast)$ exists; this is automatic whenever $\textrm{Sub}(\Gamma, \phi)$ is finite. Given this injection, $\base{N}$ is defined so that each rule $\rho$ of $\calculus{NK}^\pm$ is simulated by a corresponding rule $\kflat{\rho}$ in $\base{N}$. For instance, the conjunction rules of $\calculus{NK}^\pm$ are simulated in $\base{N}$ by the triple
\[
    \infer{r}{r_1 & r_2} \qquad \infer{r_1}{r} \qquad \infer{r_2}{r},
\]
where $r$, $r_1$, and $r_2$ are atoms formally associated with $\rho$, $\rho_1$, and $\rho_2$, respectively. The full set of rules constituting $\base{N}$ is displayed in Figure~\ref{fig:nksimulation}, where $\alpha$, $\beta$, $\gamma$ range over $\textrm{Sub}(\Gamma, \phi)$ and $l$ ranges over $\setLiterals$. Rules simulating $\exc_1$ and $\exc_2$ are unnecessary, since these are already built into the definition of derivability in a base (Definition~\ref{def:derivability-in-a-base}).

\begin{figure}[h]
    \hrule\vspace{2mm}
    \[
    \begin{array}{c}
        \infer[\kflat{\irn{\top}}]{~~\kflat{\top}~~}{}
        \qquad
        \infer[\kflat{\ern{\bot}}]{l}{\kflat{\bot}}
        \\[3mm]
        \infer[\kflat{\irn{\to}}]{\kflat{(\alpha \to \beta)}}{\deduce{\kflat{\beta}}{[\kflat{\alpha}]}}
        \qquad
        \infer[\kflat{\ern{\to}}]{\kflat{\beta}}{\kflat{(\alpha \to \beta)} & \kflat{\alpha}}
        \\[3mm]
        \infer[\kflat{\irn{\land}}]{\kflat{(\alpha \land \beta)}}{\kflat{\alpha} & \kflat{\beta}}
        \qquad
        \infer[\kflat{\ern{\land_1}}]{\kflat{\alpha}_1}{\kflat{(\alpha_1 \land \alpha_2)}}
        \quad
        \infer[\kflat{\ern{\land_2}}]{\kflat{\alpha}_2}{\kflat{(\alpha_1 \land \alpha_2)}}
        \\[3mm]
        \infer[\kflat{\irn{\lor_1}}]{\kflat{(\gamma_1 \lor \gamma_2)}}{\kflat{\gamma}_1}
        \quad
        \infer[\kflat{\irn{\lor_2}}]{\kflat{(\gamma_1 \lor \gamma_2)}}{\kflat{\gamma}_2}
        \qquad
        \infer[\kflat{\ern{\lor}}]{l}{\kflat{(\gamma_1 \lor \gamma_2)} & \deduce{l}{[\kflat{\gamma}_1]} & \deduce{l}{[\kflat{\gamma}_2]}}
    \end{array}
    \]
    \vspace{1mm}\hrule
    \caption{The simulation base $\base{N}$.}
    \label{fig:nksimulation}
\end{figure}

\subsection{The Completeness Argument}

Completeness follows from three lemmas in combination.

\begin{itemize}
    \item \textbf{AtComp.}\labelandtag{lem:CPL:basic-completeness}{AtComp} For any $L \subseteq \setLiterals$, any $l \in \setLiterals$, and any base $\base{B}$,
    \[
    L \supp_{\base{B}} l \qquad \text{iff} \qquad L \proves{\base{B}} l.
    \]
    \item \textbf{Flattening.}\labelandtag{lem:CPL:flat-equivalence}{Flattening} For any $\xi \in \Xi$ and any $\base{N}' \supseteq \base{N}$,
    \[
    \supp_{\base{N}'} \kflat{\xi} \qquad \text{iff} \qquad \supp_{\base{N}'} \xi.
    \]
    \item \textbf{Naturalizing.}\labelandtag{lem:CPL:sharpening}{Naturalizing} For any $L \subseteq \setLiterals$ and $l \in \setLiterals$ with $L \subseteq \kflat{\Sigma}$,
    \[
    L \proves{\base{N}} l \qquad \text{implies} \qquad \ksharp{L} \vdash_{\mathsf{NK}^\pm} l^\natural.
    \]
\end{itemize}

\ref{lem:CPL:basic-completeness} follows from Proposition~\ref{prop:CPL:atomic-cut}. \ref{lem:CPL:flat-equivalence} is established by the same argument as in Sandqvist~\cite{Sandqvist2015IL}; since that argument makes no essential use of the bilateral setup, we omit the details. \ref{lem:CPL:sharpening}, by contrast, requires particular care on account of the rules governing duality.

\begin{Lemma}[\ref{lem:CPL:sharpening}]
    If $L \proves{\base{N}} l$, then $\ksharp{L} \vdash_{\mathsf{NK}^\pm} \ksharp{l}$.
\end{Lemma}

\begin{proof}
    We proceed by induction on the derivation of $L \proves{\base{N}} l$ (Definition~\ref{def:derivability-in-a-base}).

    \begin{itemize}

        \item[\textsc{ref}.] From $l, L \proves{\base{N}} l$, the claim $\ksharp{l}, \ksharp{L} \vdash_{\mathsf{NK}^\pm} \ksharp{l}$ holds trivially, by the one-step derivation consisting of $\ksharp{l}$ alone.

        \item[\textsc{app}$_1$.] Every rule of the relevant form in $\base{N}$ is a flattened rule of $\calculus{NK}^\pm$. The conclusion follows from the induction hypothesis and the definition of $\ksharp{(-)}$.

        \item[\textsc{app}$_2$.] The rules of the relevant form in $\base{N}$ are again flattened rules of $\calculus{NK}^\pm$. The conclusion follows from the induction hypothesis and the definition of $\kflat{(-)}$.

        \item[\textsc{exc}$_1$.] Suppose for some $m\in \setLiterals$ we have $L \proves{\base{N}} m$ and $L \proves{\base{N}} \dual{m}$. We distinguish two cases.

        \smallskip
        \emph{Case 1: $m$ lies in the range of $\kflat{(-)}$.} That is, there exists $\phi \in \mathrm{Sub}(\Gamma, \phi)$ such that $m=\kflat{\phi}$. By the induction hypothesis, 
        \[
            \ksharp{L} \vdash_{\mathsf{NK}^\pm} \phi
            \qquad\text{and}\qquad
            \ksharp{L} \vdash_{\mathsf{NK}^\pm} \neg\,\phi.
        \]
        Applying $\ern{\to}$ followed by $\ern{\bot}$ yields $\ksharp{L} \vdash_{\mathsf{NK}^\pm} \psi$ for every $\psi$. Choosing $\psi = \ksharp{l}$ completes the case. 

        \smallskip
        \emph{Case 2: $m$ does not lie in the range of $\kflat{(-)}$.} By definition $\ksharp{m}=m$. By the induction hypothesis,
        \[
            \ksharp{L} \vdash_{\mathsf{NK}^\pm} m
            \qquad\text{and}\qquad
            \ksharp{L} \vdash_{\mathsf{NK}^\pm} \dual{m}.
        \]
        Applying $\exc_1$ followed by $\ern{\bot}$ yields $\ksharp{L} \vdash_{\mathsf{NK}^\pm} l$, as required.

        \item[\textsc{exc}$_2$.] Suppose $L, m \proves{\base{N}} l$ and $L, \dual{m} \proves{\base{N}} l$, so that $L \proves{\base{N}} l$. If $m$ does not lie in the range of $\kflat{(-)}$, the conclusion follows by a direct application of $\exc_2$. If $m$ does lie in the range of $\kflat{(-)}$, the conclusion follows from $\exc_2$ together with Proposition~\ref{prop:negation}.

    \end{itemize}

    This completes the induction.
\end{proof}

\begin{Theorem}[Completeness]
\label{thm:CPL:completeness-support}
    Suppose $\Gamma$ and $\phi$ admit an injection of the form $(\ast)$. If $\Gamma \supp \phi$, then $\Gamma \vdash_{\mathsf{NK}^\pm} \phi$.
\end{Theorem}

\begin{proof}
    Assume $\Gamma \supp \phi$. Let $\kflat{(-)}$ be a flattening operator of the form $(\ast)$ and $\base{N}$ its associated simulation base. By~\ref{lem:CPL:flat-equivalence}, $\kflat{\Gamma} \supp_{\base{N}} \kflat{\phi}$. By~\ref{lem:CPL:basic-completeness}, $\kflat{\Gamma} \proves{\base{N}} \kflat{\phi}$. By~\ref{lem:CPL:sharpening}, $\ksharp{(\kflat{\Gamma})} \vdash_{\mathsf{NK}^\pm} \ksharp{(\kflat{\phi})}$ --- that is, $\Gamma \vdash_{\mathsf{NK}^\pm} \phi$, as required.
\end{proof}

\section{Conclusion} \label{sec:conclusion}

This paper presents a \textit{proof-theoretic semantics} (P-tS) for classical propositional logic. Unlike existing P-tS for classical logic as studied by Sandqvist~\cite{Sandqvist2015hypothesis,Sandqvist2009CL}, Makinson~\cite{makinson2014inferential}, and Gheorghiu~\cite{Gheorghiu2024fol}, ours is based on \emph{literals}. Dual literals represent the assertion and denial of a proposition with the same content, with the bilateralist distinction restricted to the atomic level: only literals bear illocutionary force, while complex formulae are treated as logico-grammatical structures of reasoning. Duality at the atomic level is governed by two harmonious inferential principles --- exclusion ($\exc_1$) and case analysis on literals ($\exc_2$) --- which, by a result of Negri and von Plato~\cite{Negri2008}, suffice to recover classical logic from an intuitionistic base. The setup is designed such that the treatment of classical logic closely follows that of \emph{intuitionistic} logic. The advantage is that such logics have received a much more systematic treatment --- see, for example, work by Sandqvist~\cite{Sandqvist2015IL}, Pym et al.~\cite{IMLL,BI,NAF,PTV-BES}, Buzoku~\cite{Buzoku2024}. As such, this paper opens the possibility of an analogous systematic treatment of classical logics.

Note that there already do exist systematic treatments of the proof-theoretic semantics for classical logic. In particular, the work by Eckhardt and Pym~\cite{Eckhardt,Eckhardt2} directly builds on the work by Makinson~\cite{makinson2014inferential} to develop the proof-theoretic semantics of normal modal logics. While this line of work is unarguable proof-theoretic semantics in the sense of this paper, it is closely related to the extant model-theoretic semantics for these logics. We hope that the approach presented herein offers an entirely \emph{alternative} semantics. This hope is based on the facts that the relationship between the P-tS and M-tS for intuitionistic propositional logics, as given by Sandqvist~\cite{Sandqvist2015IL} and Kripke~\cite{Kripke1965}, respectively, currently remains open.

\bibliographystyle{asl}
\bibliography{bib}

\end{document}